\let\orgdescriptionlabel\descriptionlabel
\renewcommand*{\descriptionlabel}[1]{%
  \let\orglabel\label
  \let\label\@gobble
  \phantomsection
  \edef\@currentlabel{#1\unskip}%
  \let\label\orglabel
  \orgdescriptionlabel{#1}%
}
\providecommand{\keywords}[1]
{
  \small	
  \textbf{Keywords---} #1\smallskip
  
}
\providecommand{\msc}[1]
{
  \small	
  \textbf{Mathematics Subject Classification (2020) } #1\smallskip
}
\title{Energy Minimizing Configurations for Single-Director Cosserat Shells}
\author{Timothy J. Healey\\
\small{Department of Mathematics, Cornell University}\and
Gokul G. Nair\footnote{\texttt{gn234@cornell.edu}}\\
\small{Center for Applied Mathematics, Cornell University}}
\date{}
\begin{document}

\maketitle
\begin{abstract}
We consider a class of single-director Cosserat shell models accounting for both curvature and finite mid-plane strains. We assume a polyconvexity condition for the stored-energy function that reduces to a physically correct membrane model in the absence of bending.  With appropriate growth conditions, we establish the existence of energy minimizers. The local orientation of a minimizing configuration is maintained via the blowup of the stored energy as a version of the local volume ratio approaches zero. Finally, we specialize our results to three constrained versions of the theory commonly employed in the subject.
\end{abstract}
\noindent
\keywords{Nonlinear elasticity, Polyconvexity, Energy minimization, Plates and Shells}
\noindent
\msc{74B20, 35D99, 49K20}

\section{Introduction}\label{sec:introduction}
We consider a class of single-director Cosserat shell models, accounting for both curvature and finite mid-plane strains.  We are motivated in part by the wrinkling of highly stretched, thin elastomer sheets, e.g.,~\cite{li2016stability},~\cite{nayyar2011stretch}.  In addition, shell models incorporating finite mid-plane strains are widely available in commercial codes, e.g., as employed in~\cite{nayyar2011stretch}.  However, most rigorous existence results for shells rely on small-thickness expansions from 3D elasticity, yielding theories characterized by small (or even zero) mid-plane strains, e.g.,~\cite{friesecke2006hierarchy}.  These are inadequate for predicting the phenomena we have in mind.  Instead, we provide an existence theorem for a general class of direct shell models, incorporating a physically correct mid-plane or membrane energy.  Local orientation of configurations is maintained via the blowup of the energy density function as a version of the local volume ratio approaches zero.  As such, we obtain global energy minimizers.  However, as in the case of bulk nonlinear elasticity, we are generally unable to ensure that these correspond to weak solutions of the Euler-Lagrange equilibrium equations.
\smallskip

A general approach to polyconvexity in the context of higher-order elasticity problems is presented in~\cite{ball1981null}. Among other examples, the set-up for single-director Cosserat continua is outlined there. On that basis, the complete details for energy minimization in single-director Cosserat shells are presented in~\cite{ciarlet2013orientation}. Local orientation preservation of minimizers is carried out in the same manner indicated above. However, the model in~\cite{ciarlet2013orientation} includes a “relaxed” mid-plane energy density.  That is, in the absence of bending energy, the reduced membrane density maintains polyconvexity in the sense of~\cite{ball1981null}. Apparently, this is equivalent to some type of tension-field theory~\cite{pipkin1986relaxed}, and the inclusion of bending energy is somewhat redundant.  In any case, the model precludes wrinkling.  Here we present a modified definition of polyconvexity such that, in the absence of bending energy, the membrane energy density is not rank-one convex (much less polyconvex).  At the same time, our membrane energy is polyconvex when restricted to planar 2D nonlinear elasticity. Among other things, the incorporation of bending energy in this model, enables the resolution of wrinkling patterns. We elaborate on this in Section~\ref{sec:concluding-remarks}.
\smallskip

The outline of the work is as follows: We formulate the class of problems considered and state our hypotheses in Section~\ref{sec:problem_formulation}. The latter includes growth conditions, a polyconvexity condition distinct from that employed in~\cite{ciarlet2013orientation}, and the blow-up of the stored-energy function as a measure of the local volume ratio approaches zero. In Section~\ref{sec:weak-lower-semi-continuity}, we establish weak lower semicontinuity of the energy functional, and we prove our existence theorem in Section~\ref{sec:energy-minimizers}. We specialize our results to three constrained versions of single-director theories in Section~\ref{sec:constrained-minimizers}: (1) the so-called special theory characterized by a unit director field (shearable without thickness change); (2) the director field is normal to the surface (unshearable with thickness change); (3) the classical Kirchhoff-Love hypothesis whereby the director field coincides with a unit normal field on the surface (unshearable without thickness change). We make some concluding remarks in Section 6.

\section{Problem Formulation}\label{sec:problem_formulation}
We let $\RR^n$ denote both Euclidean point space and its translate or tangent space, and we henceforth make the identification $\RR^2\cong\text{span}\{\bm e_1,\bm e_2\}$, where $\{\bm e_1,\bm e_2,\bm e_3\}$ denotes the standard orthonormal basis for $\RR^3$. Let $\Omega\subset\RR^2$ be an open bounded domain with a strongly locally Lipschitz boundary $\partial\Omega$. We associate $\bar\Omega$ with a reference configuration for a material surface in a ``flat'' state as follows: A \textit{configuration} is specified by two fields on $\bar\Omega$: a deformation $\bm f:\bar\Omega\rightarrow\RR^3$ and a director field $\bm d:\bar\Omega\rightarrow\RR^3$; the reference configuration corresponds to $\bm f(x)\equiv x$ and $\bm d(x)\equiv\bm e_3$. The latter need not be stress free. The gradients or total derivatives of $\bm f,\bm d$ at $x\in\Omega$ are denoted $\bm F(x):=\nabla \bm f(x)$, $\bm G(x):=\nabla\bm d(x)\in L(\RR^2,\RR^3)$, respectively. We further require that a smooth configuration satisfy the local orientation condition
\begin{equation}\label{eqn:orientation-preservation}
    J(\bm d,\nabla\bm f):=\bm d\cdot (\bm f_{,1}\times\bm f_{,2})>0\text{ in }\bar\Omega,
\end{equation}
where $\bm f_{,\alpha}$, $\alpha=1,2$, denote partial derivatives and $\bm a\times\bm b$ is the usual right-handed cross product in $\RR^3$. We list the set of 15 independent 2$\times$2 sub-determinants, $\{m_l(\bm F,\bm G)\}_{l=1}^{15}$, of the 2$\times$6 gradient matrix $\left[\bm F^T\ |\ \bm G^T\right]$, which play an important role in what follows:
\begin{equation*}
        \begin{aligned}[c]
            m_{1} &= F_{21}F_{32} - F_{22}F_{31}\\
            m_{4} &= F_{11}G_{12} - F_{12}G_{11}\\
            m_{7} &= F_{21}G_{12} - F_{22}G_{21}\\
            m_{10} &= F_{31}G_{12} - F_{32}G_{11}\\
            m_{13} &= G_{21}G_{32} - G_{22}G_{31}\\
        \end{aligned}
        \qquad
        \begin{aligned}
            m_{2} &= F_{31}F_{12} - F_{32}F_{11}\\
            m_{5} &= F_{11}G_{22} - F_{12}G_{21}\\
            m_{8} &= F_{21}G_{22} - F_{22}G_{21}\\
            m_{11} &= F_{31}G_{22} - F_{32}G_{21}\\
            m_{14} &= G_{12}G_{31} - G_{11}G_{32}\\
        \end{aligned}
        \qquad
        \begin{aligned}[c]    
            m_{3} &= F_{11}F_{22} - F_{12}F_{21}\\
            m_{6} &= F_{11}G_{32} - F_{12}G_{31}\\
            m_{9} &= F_{21}G_{32} - F_{22}G_{31}\\
            m_{12} &= F_{31}G_{32} - F_{32}G_{31}\\
            m_{15} &= G_{11}G_{22} - G_{12}G_{21}.\\
        \end{aligned}
    \end{equation*}


We assume that the surface is equipped with a stored-energy function, $W(x,\bm d,\bm F,\bm G)$, $W:\bar\Omega\times \mathcal{O}^+\times\RR^{3\times 2}\rightarrow [0,\infty)$, satisfying objectivity:
\begin{align*}
    W(x,\bm Q\bm d,\bm Q\bm F,\bm Q\bm G)\equiv W(x,\bm d,\bm F,\bm G)\quad\text{for all }\bm Q\in SO(3),
\end{align*}
where $\mathcal{O}^+:=\left\{(\bm d,\bm F)\in\RR^3\times\RR^{3\times 2}:J>0\right\}$.
\smallskip

\noindent
We further assume:
\begin{description}
    \item[(H1)\label{itm:growth}] There exist constants $p,q,r>4/3$, $s>1$, $C_1>0$ and $C_2\in\RR$ such that
    \begin{equation*}
        W(x,\bm d,\bm F,\bm G)\geq C_1\left\{\abs{\bm F}^p +\abs{\bm G}^q+\sum_{l=1}^3\abs{m_l}^r+\sum_{l=4}^{15}\abs{m_l}^s\right\} +C_2.
    \end{equation*}
    
    \item[(H2)\label{itm:polyconvexity}] There is a $C^1$ function $\Phi:\bar\Omega\times\RR^3\times\RR^{3\times2}\times\RR^{3\times 2}\times(0,\infty)\times\RR^{12}\rightarrow[0,\infty)$, such that
    \begin{align}\label{eqn:polyconvex}
        (\bm F,\bm G,J,m_4,...,m_{15})\mapsto \Phi(x,\bm d,\bm F,\bm G,J,m_4,...,m_{15})\text{ is convex}
    \end{align}
    and $W(x,\bm d,\bm F,\bm G)\equiv\Phi(x,\bm d,\bm F,\bm G,J,m_4,...,m_{15})$.
    
    \item[(H3)\label{itm:growth2}] $\Phi\rightarrow +\infty$ as $J\rightarrow0^+.$
\end{description}
\smallskip

Let $L^p(\Omega,\RR^3)$ denote the space of $L^p$-integrable 3-vector valued functions on $\Omega$ and let $W^{1,p}(\Omega,\RR^3)\subset L^p(\Omega,\RR^3)$ denote the Sobolev space of vector fields whose weak partial derivatives are also $L^p$-integrable. When the context is clear, we will avoid writing the domain and co-domain in our notation and simply refer to these spaces as $L^p$ and $W^{1,p}$. The norms on these spaces are defined by
\begin{align*}
    &\norm{\bm f}^p_{L^p(\Omega,\RR^3)}=\int_\Omega\abs{\bm f}^p\dif x,\\
    &\norm{\bm f}^p_{W^{1,p}(\Omega,\RR^3)}=\norm{\bm f}^p_{L^p(\Omega,\RR^3)}+\int_\Omega\abs{\nabla\bm f}^p\dif x.
\end{align*}
Consider a subset $\Gamma\subset\partial\Omega$ with positive length, i.e., $\abs{\Gamma}_{\partial\Omega}>0$, and define
\[
    W_\Gamma^{1,p}(\Omega,\RR^3)=\{\bm u\in W^{1,p}(\Omega,\RR^3):\bm u = 0\text{ a.e.~on }\Gamma\},
\]
where $\bm u$ on the boundary is understood in the sense of trace. We define the \textit{admissible set}

\begin{align*}
    \mathcal{A}:=\{(\bm f,\bm d)\in W^{1,p}(\Omega,\RR^3)\times W^{1,q}(\Omega,\RR^3):m_l\in L^r(\Omega),\, l=1,2,3;&\\
    m_l\in L^s(\Omega),\, l=4,...,15;\, J\in L^1(\Omega);\, J>0 \text{ a.e.~in }\Omega;&\\
    \bm f -\bm f_o\in W^{1,p}_\Gamma(\Omega,\RR^3);\,\bm d- \bm d_o\in W^{1,q}_\Gamma(\Omega,\RR^3)&\},
\end{align*}
where $(\bm f_o,\bm d_o)\in W^{1,p}(\Omega,\RR^3)\times W^{1,q}(\Omega,\RR^3)$ are prescribed and satisfy $\bm d_o\cdot(\bm f_{o ,1}\times\bm f_{o, 2})>0$ a.e. 
\smallskip

\begin{remark}\label{remark:p>2}
    We note that when $p,q>2$, a weakened version of~\ref{itm:growth}, viz.,
    \begin{equation*}
        W(x,\bm d,\bm F,\bm G)\geq C_1\left\{\abs{\bm F}^p +\abs{\bm G}^q\right\} +C_2,
    \end{equation*}
    is sufficient to establish the results that follow. In this case, the requirement $m_l\in L^r$ (or $L^s$) can be dropped from the definition of $\mathcal{A}$ as well.
\end{remark}

The total potential energy is given by
\begin{equation}\label{eqn:energy}
    E[\bm f,\bm d]=\int_\Omega W(x,\bm d(x),\nabla\bm f(x),\nabla\bm d(x))\dif x-L(\bm f,\bm d),
\end{equation}
where $L$ is a bounded linear functional on $W^{1,p}(\Omega,\RR^3)\times W^{1,q}(\Omega,\RR^3)$ representing ``dead'' loading. For example,
\begin{equation}
    L(\bm f,\bm d)=\int_\Omega(\bm b\cdot\bm f+\bm g\cdot \bm d)\dif x+\int_{\Gamma^c}[\bm \tau\cdot\bm f+\bm \mu\cdot \bm d]\dif s,
\end{equation}
where $\bm b,\bm g\in L^\infty(\Omega,\RR^3)$, $\bm\tau,\bm\mu\in L^\infty(\Gamma^c,\RR^3)$ are prescribed loadings and $\Gamma^c:=\partial\Omega\setminus\Gamma$.

\section{Weak Lower Semicontinuity}\label{sec:weak-lower-semi-continuity}
We show that~\ref{itm:polyconvexity} implies weak lower semicontinuity of $E[\cdot]$ in the following sense:
\begin{proposition}\label{prop:wlsc}
    The energy functional~\eqref{eqn:energy} is weakly lower semicontinuous, i.e.,
    \begin{align*}
        \liminf_{k\rightarrow\infty}E[\bm f^k,\bm d^k]\geq E[\bm f,\bm d],
    \end{align*}
    whenever $\bm f^k\weakarrow\bm f$ weakly in $W^{1,p}$, $\bm d^k\weakarrow \bm d$ weakly in $W^{1,q}$, $J^k:=J(\bm d^k,\nabla\bm f^k)\weakarrow J:=J(\bm d,\nabla\bm f)$ with $J^k,J>0$ a.e.,~and $m_l^k:=m_l(\nabla\bm f^k,\nabla\bm d^k)\weakarrow m_l:=m_l(\nabla\bm f,\nabla\bm d)$, weakly in $L^1$, $l=4,...,15$, for $p,q\geq 1$.
\end{proposition}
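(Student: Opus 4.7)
The plan is to reduce the claim to a standard Ioffe-type weak lower semicontinuity theorem for integrands that are continuous in some variables and convex in others. Since $L$ is a bounded linear functional on $W^{1,p}\times W^{1,q}$, the weak convergences $\bm f^k\weakarrow\bm f$ and $\bm d^k\weakarrow\bm d$ immediately yield $L(\bm f^k,\bm d^k)\to L(\bm f,\bm d)$. It therefore suffices to establish weak lower semicontinuity of
\[
    I[\bm f,\bm d]:=\int_\Omega\Phi\bigl(x,\bm d,\nabla\bm f,\nabla\bm d,J,m_4,\ldots,m_{15}\bigr)\dif x.
\]

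First, I would invoke Rellich-Kondrachov: since $\Omega\subset\RR^2$ is bounded and Lipschitz, the embedding $W^{1,q}(\Omega,\RR^3)\hookrightarrow L^1(\Omega,\RR^3)$ is compact for every $q\geq 1$. Passing to a (not relabeled) subsequence, I may therefore assume $\bm d^k\to\bm d$ pointwise a.e.~in $\Omega$. The remaining arguments of $\Phi$ form a tuple $v^k:=(\nabla\bm f^k,\nabla\bm d^k,J^k,m_4^k,\ldots,m_{15}^k)$ which, by the stated hypotheses together with the continuous inclusions $L^p,L^q\hookrightarrow L^1$ on the bounded domain $\Omega$, converges weakly in $L^1(\Omega)$ to $v:=(\nabla\bm f,\nabla\bm d,J,m_4,\ldots,m_{15})$.

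Next, I would apply a classical weak lower semicontinuity theorem of Ioffe: for a nonnegative normal integrand $f(x,u,v)$ that is convex in $v$, the map $(u,v)\mapsto\int_\Omega f(x,u,v)\dif x$ is sequentially lower semicontinuous with respect to a.e.~convergence of $u$ and weak $L^1$ convergence of $v$. Taking $u=\bm d$ and $v$ as above, hypothesis~\ref{itm:polyconvexity} supplies the convexity in $v$ and the nonnegativity, while the $C^1$ regularity of $\Phi$ ensures joint continuity in $(u,v)$ for each $x$, and hence the normal-integrand property. This yields $\liminf_k I[\bm f^k,\bm d^k]\geq I[\bm f,\bm d]$ along the extracted subsequence, and the standard subsequence principle (every subsequence of the original admits a further subsequence along which the above extraction and inequality hold) then promotes this to the liminf inequality for the full sequence.

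The main technical point is that $\Phi$ is \emph{not} assumed convex in its $\bm d$-slot, so a direct Mazur-type argument is unavailable. The role of Rellich-Kondrachov is precisely to convert the weak $W^{1,q}$ convergence of $\bm d^k$ into pointwise a.e.~convergence, which is exactly what the Ioffe theorem requires of the non-convex slot. Beyond the nonnegativity contained in~\ref{itm:polyconvexity}, no growth or coercivity on $\Phi$ is needed at this stage; the growth condition~\ref{itm:growth} will be used only in the subsequent existence proof to extract a minimizing sequence satisfying the weak convergences hypothesized here.
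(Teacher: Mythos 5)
Your argument is correct in its main thrust and takes a genuinely different route from the paper. Whereas the paper proves the needed semicontinuity ``by hand'' --- passing to a subsequence along which the liminf is attained, using Rellich and Egorov to obtain uniform convergence of $\bm d^k$ on a large subset $\mathcal U_\epsilon$, truncating further to $\Upsilon_\epsilon=\{|\bm d|+|\nabla\bm f|+|\nabla\bm d|\leq 1/\epsilon,\ J\geq\epsilon\}$ so that the $C^1$ supporting-hyperplane inequality for the convex function $\Phi$ has bounded coefficients, passing to the limit, and finally letting $\epsilon\to 0$ via monotone convergence --- you simply invoke the Ioffe lower semicontinuity theorem. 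The two are morally equivalent: the paper's Egorov/truncation argument is a self-contained proof of exactly the special case of Ioffe's theorem that is needed here, and both identify the same key dichotomy between the slot that merely converges a.e.~($\bm d$, not assumed convex) and the slot that only converges weakly in $L^1$ (where convexity is essential). Citing Ioffe buys brevity; the paper's route is elementary and keeps the argument self-contained.

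There is one point you pass over that the paper's truncation handles explicitly and that a direct citation of Ioffe does not: the integrand $\Phi$ is only defined on $\bar\Omega\times\RR^3\times\RR^{3\times2}\times\RR^{3\times2}\times(0,\infty)\times\RR^{12}$, i.e.~for $J>0$, and takes values in $[0,\infty)$. Ioffe-type theorems require a normal integrand defined (with values in $[0,\infty]$) on the full space in the convex slot. Extending $\Phi$ by $+\infty$ for $J\leq 0$ yields a convex extension, but its lower semicontinuity at $J=0$ is not automatic from~\ref{itm:polyconvexity} alone --- it would follow from~\ref{itm:growth2}, which you explicitly set aside as ``not needed at this stage.'' The fix is either to invoke~\ref{itm:growth2} after all, or to replace the naive extension by the lower semicontinuous convex envelope $\bar\Phi\leq\Phi$ (which still equals $\Phi$ on $\{J>0\}$ and suffices since $J^k,J>0$ a.e.). The paper avoids this bookkeeping entirely by restricting to $\{J\geq\epsilon\}$ before applying the convexity inequality, which is precisely why $\Upsilon_\epsilon$ carries the condition $J\geq\epsilon$ in addition to the boundedness of $|\bm d|,|\nabla\bm f|,|\nabla\bm d|$. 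I would also note, as a matter of exposition, that your ``subsequence principle'' step can be replaced by the paper's simpler device of passing at the outset to a subsequence realizing $\liminf_k I[\bm f^k,\bm d^k]$; both are valid.
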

\begin{proof}
    Since $L$ is weakly continuous, we focus on the internal energy
    \[
        I[\bm f,\bm d]:=\int_\Omega W(x,\bm d,\nabla\bm f,\nabla\bm d)\dif x.
    \]
    Assume (by passing a subsequence, if necessary) that 
    \[
        \lim_{k\rightarrow\infty}I[\bm f^k,\bm d^k]=\liminf_{k\rightarrow\infty}I[\bm f^k,\bm d^k].
    \]
    From compact embedding~\cite{brezis2010functional}, $\bm d^k\weakarrow\bm d$ in $W^{1,q}\implies\bm d^k\rightarrow \bm d$ strongly in $L^q$. Consequently, for some subsequence (without relabeling) $\bm d^k\rightarrow\bm d$ pointwise a.e. By Egorov's theorem, for $\epsilon>0$, there is a set $\mathcal{U}_\epsilon$ with $\abs{\Omega\setminus\mathcal{U}_\epsilon}\leq\epsilon$ such that $\bm d^k\rightarrow\bm d$ uniformly on $\mathcal{U}_\epsilon$. We also define $\Upsilon_{\epsilon}:=\{x\in\Omega:\abs{\bm d}+\abs{\nabla\bm f}+\abs{\nabla\bm d}\leq 1/\epsilon,\ J\geq\epsilon\}$ and $\Omega_\epsilon:=\mathcal{U}_\epsilon\cap\Upsilon_\epsilon$. Hence, $\abs{\Omega\setminus\Omega_\epsilon}\rightarrow0$ as $\epsilon\rightarrow 0$. By virtue of~\ref{itm:polyconvexity}, we then find
    \begin{align}\label{eqn:hyperplane-applied-to-polyconvex}
        \int_\Omega W(x,\bm d^k,\nabla\bm f^k,\nabla\bm d^k)\dif x\geq& \int_{\Omega_{\epsilon}} W(x,\bm d^k,\nabla\bm f^k,\nabla\bm d^k)\dif x\nonumber\\
        =&\int_{\Omega_{\epsilon}}\Phi( x,\bm d^k,\nabla \bm f^k,\nabla\bm d^k,J^k,m_4,...,m_{15})\dif x\nonumber\\
        \geq&\int_{\Omega_{\epsilon}}\Phi( x,\bm d^k,\nabla\bm f,\nabla\bm d,J,m_4,...,m_{15})\dif x\nonumber\\ 
        &+ \int_{\Omega_{\epsilon}}\Dif_{\bm F}\Phi( x,\bm d^k,\nabla\bm f,\nabla\bm d,J,m_4,...,m_{15})\cdot(\nabla\bm f^k -\nabla\bm f)\dif x\nonumber\\
        &+ \int_{\Omega_{\epsilon}}\Dif_{\bm G}\Phi( x,\bm d^k,\nabla\bm f,\nabla\bm d,J,m_4,...,m_{15})\cdot(\nabla\bm d^k -\nabla\bm d)\dif x\nonumber\\
        &+ \int_{\Omega_{\epsilon}}\Dif_{J}\Phi( x,\bm d^k,\nabla\bm f,\nabla\bm d,J,m_4,...,m_{15})(J^k-J)\dif x\nonumber\\
        &+ \int_{\Omega_{\epsilon}}\sum_{l=4}^{15}\Dif_{m_l}\Phi( x,\bm d^k,\nabla\bm f,\nabla\bm d,J,m_4,...,m_{15})(m_l^k-m_l)\dif x,
    \end{align}
    where, $\Dif_\rho$, $\rho=\bm F,\bm G,J,m_4,...,m_{15}$ denotes the partial derivatives.
    
    In the limit $k\rightarrow\infty$, weak convergence implies that the last four integrals in the final inequality of~\eqref{eqn:hyperplane-applied-to-polyconvex} all vanish, while the first converges to $\int_\Omega\chi_{\Omega_\epsilon}W(x,\bm d,\nabla\bm f,\nabla\bm d)\dif x$, where $\chi_{(\cdot)}$ denotes the characteristic function. Taking $\epsilon\rightarrow 0$, the desired result then follows from the monotone convergence theorem, and hence, $\lim_{k\rightarrow\infty} I[\bm f_k,\bm d_k]\geq\int_\Omega W(x,\bm d,\nabla\bm f,\nabla\bm d)\dif x=I[\bm f,\bm d]$.
\end{proof}

\section{Energy Minimizers}\label{sec:energy-minimizers}
Our main result is the following:
\begin{theorem}\label{thm:existence1}
    Suppose that $\mathcal{A}$ is non-empty with $\inf_{\mathcal{A}}E[\bm f,\bm d]<\infty$. Then there exists $(\bm f^*,\bm d^*)\in\mathcal{A}$ such that $E[\bm f^*,\bm d^*]=\inf_{\mathcal A}E[\bm f,\bm d]$.
\end{theorem}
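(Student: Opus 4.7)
My plan is to apply the direct method of the calculus of variations, in the classical style of Ball's work on polyconvex elasticity adapted to the shell setting.

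First, I take a minimizing sequence $\{(\bm f^k,\bm d^k)\}\subset\mathcal A$ with $E[\bm f^k,\bm d^k]\to\inf_{\mathcal A}E$. The growth condition~\ref{itm:growth}, together with boundedness of $L$ and a Poincar\'e inequality on $W^{1,p}_\Gamma\times W^{1,q}_\Gamma$, yields uniform bounds on $\|\bm f^k\|_{W^{1,p}}$, $\|\bm d^k\|_{W^{1,q}}$, $\|m_l^k\|_{L^r}$ for $l\le 3$ and $\|m_l^k\|_{L^s}$ for $l\ge 4$. A bound on $\|J^k\|_{L^1}$ follows by writing $J^k=\sum_{i=1}^3 d_i^k m_i^k$ and applying H\"older's inequality, using the Rellich--Kondrachov embedding $W^{1,q}\hookrightarrow L^{q_1}$ for some large $q_1$ (available since $q>4/3$). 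After passing to subsequences, I obtain $\bm f^k\weakarrow\bm f^*$ in $W^{1,p}$, $\bm d^k\weakarrow\bm d^*$ in $W^{1,q}$ (with $\bm d^k\to\bm d^*$ strongly in $L^{q_1}$ and pointwise a.e.), and $m_l^k\weakarrow\mu_l$ in the appropriate Lebesgue space.

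The next step is to identify these limits with the minors and $J$ of $(\bm f^*,\bm d^*)$. By the Ball--Morrey--Reshetnyak theory of weak continuity of $2\times 2$ subdeterminants, the exponents $p,q,r>4/3$ and $s>1$ ensure $\mu_l=m_l(\nabla\bm f^*,\nabla\bm d^*)$ a.e.~for each $l$. For the orientation, the decomposition $J^k=\sum_{i=1}^3 d_i^k m_i^k$ combined with strong $L^{q_1}$ convergence of $\bm d^k$ and weak $L^r$ convergence of $m_i^k$ gives $J^k\weakarrow J^*:=J(\bm d^*,\nabla\bm f^*)$ in $L^1$. Trace is weakly continuous in Sobolev spaces, so the boundary conditions pass to the limit: $\bm f^*-\bm f_o\in W^{1,p}_\Gamma$ and $\bm d^*-\bm d_o\in W^{1,q}_\Gamma$.

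The crux is verifying $J^*>0$ a.e., which is required both for $(\bm f^*,\bm d^*)\in\mathcal A$ and for the applicability of Proposition~\ref{prop:wlsc}. I exploit~\ref{itm:growth2} by extending $\Phi$ to $J\le 0$ by $+\infty$, obtaining a convex, lower semicontinuous function on the enlarged domain. Repeating the supporting-hyperplane argument of Proposition~\ref{prop:wlsc} with this extension yields
\[
\int_\Omega \Phi(x,\bm d^*,\nabla\bm f^*,\nabla\bm d^*,J^*,m_4^*,\ldots,m_{15}^*)\dif x \le \liminf_{k\to\infty}\int_\Omega W(x,\bm d^k,\nabla\bm f^k,\nabla\bm d^k)\dif x<\infty.
\]
The blow-up condition~\ref{itm:growth2} then precludes $\{J^*=0\}$ from having positive measure, so $J^*>0$ a.e.~and $(\bm f^*,\bm d^*)\in\mathcal A$. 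Proposition~\ref{prop:wlsc}, now legitimately applicable, together with the weak continuity of $L$, gives $E[\bm f^*,\bm d^*]\le\inf_{\mathcal A}E$; the reverse inequality holds by admissibility, yielding the minimizer. The main obstacle is precisely the circular appearance of $J^*>0$: Proposition~\ref{prop:wlsc} presumes this positivity, which is what one wishes to establish. The extension-by-$+\infty$ device, paralleling Ball's classical orientation-preservation argument in bulk nonlinear elasticity, resolves the issue, while the growth exponents in~\ref{itm:growth} are calibrated precisely to the $4/3$ threshold of the null-Lagrangian theory used in the limit identification.
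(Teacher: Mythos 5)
Your overall plan matches the paper's — direct method, coercivity via \ref{itm:growth} plus a generalized Poincar\'e inequality, weak compactness in reflexive spaces, null-Lagrangian identification of the weak limits of the minors $m_l^k$, and passing $J^k$ to the limit by combining strong convergence of $\bm d^k$ with weak convergence of $(m_1^k,m_2^k,m_3^k)=\bm f^k_{,1}\times\bm f^k_{,2}$. The boundary conditions and the final invocation of Proposition~\ref{prop:wlsc} are likewise in step with the paper. The one genuine divergence, and the one genuine gap, is the orientation step.

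You propose to obtain $J^*>0$ a.e.~by extending $\Phi$ to $J\le 0$ by $+\infty$ and ``repeating the supporting-hyperplane argument of Proposition~\ref{prop:wlsc} with this extension.'' That does not go through as stated, for two reasons. First, the hyperplane argument in Proposition~\ref{prop:wlsc} uses that $\Phi$ is $C^1$ (via \ref{itm:polyconvexity}) to write the tangent-plane inequality $\Phi(\xi^k)\geq\Phi(\xi)+\Dif\Phi(\xi)\cdot(\xi^k-\xi)$ \emph{at the limit point} $\xi$ and then kills the linear terms by weak convergence. The extended integrand $\tilde\Phi$ is merely convex and lower semicontinuous, and at any point of $\Omega$ where $J^*\le 0$ the value is $+\infty$ and no finite affine minorant touches there; the tangent-plane inequality simply has no content at those points. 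Second, and more structurally, the proof of Proposition~\ref{prop:wlsc} is built on the exhaustion $\Omega_\epsilon=\mathcal U_\epsilon\cap\{|\bm d|+|\nabla\bm f|+|\nabla\bm d|\le 1/\epsilon,\ J\ge\epsilon\}$ followed by monotone convergence as $\epsilon\to 0$. If $\{J^*\le 0\}$ had positive measure, the sets $\Omega_\epsilon$ never cover $\Omega$ up to a null set, and the argument only delivers
\[
\liminf_{k\to\infty}\int_\Omega W^k\,\dif x\ \ge\ \int_{\{J^*>0\}} W^*\,\dif x,
\]
which is finite and yields no contradiction. In short, what you want to prove ($J^*>0$ a.e.) is already baked into the machinery you propose to reuse. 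To make your extension-by-$+\infty$ route work, you would have to replace the $C^1$ supporting-hyperplane step by the representation of the convex l.s.c.\ function $\tilde\Phi$ as a countable supremum of affine functions (or invoke a general theorem on sequential weak lower semicontinuity for normal convex integrands, \emph{\`a la} Ioffe/Ekeland--Temam), and also adjust the Egorov/truncation step so it no longer relies on $J^*\ge\epsilon$. That is a genuinely different and heavier proof of lower semicontinuity than the paper's Proposition~\ref{prop:wlsc} supplies.

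The paper sidesteps all of this by settling $J^*>0$ a.e.~\emph{before} invoking Proposition~\ref{prop:wlsc}, and does so elementarily: Mazur's theorem applied to the weakly $L^1$-convergent sequence $J^k$ produces convex combinations converging strongly, hence a.e.~along a subsequence, giving $J^*\ge 0$ a.e.\ directly from $J^k>0$; then, if $J^*=0$ on a set $\mathcal U$ of positive measure, testing weak convergence against $\chi_{\mathcal U}$ gives $J^k\to 0$ in $L^1(\mathcal U)$ hence a.e.\ along a further subsequence, and \ref{itm:growth2} with Fatou's lemma forces the energy to blow up on $\mathcal U$, contradicting $\inf_{\mathcal A}E<\infty$. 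I recommend you substitute this Mazur-plus-Fatou step for your extension argument, after which your sketch is essentially the paper's proof.
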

\begin{proof}
    In this proof, we will focus on the more technical case when $4/3<p,q\leq 2$. Otherwise, the arguments simplify.
    
    Integrating the growth condition~\ref{itm:growth} yields
    \begin{align*}
        \int_\Omega W(x,\bm d,&\nabla\bm f,\nabla\bm d)\dif x\geq C_1\left\{\norm{\nabla\bm f}^p_{L^p}+\norm{\nabla\bm d}^q_{L^q}+\sum_{l=1}^{3}\norm{m_l}_{L^r}^r+\sum_{l=4}^{15}\norm{m_l}_{L^s}^s\right\}+C_2'
    \end{align*}
    A generalized Poincar\'e inequality~\cite{morrey2009multiple} reads
    \[
        \int_\Omega\abs{v}^p\dif x\leq C\left\{\int_\Omega\abs{\nabla v}^p\dif x+\abs{\int_\Gamma Tv\dif a}^p\right\},\quad 1\leq p<\infty,
    \]
    where $T:W^{1,p}(\Omega)\rightarrow L^p(\partial\Omega)$ is the trace operator. From this, we find
    
    \begin{align*}
        \int_\Omega W( x,\bm d,&\nabla\bm f,\nabla\bm d)\dif x\geq C_1'\left\{\norm{\bm f}_{W^{1,p}}^p+\norm{\bm d}^q_{W^{1,q}}+\sum_{l=1}^{3}\norm{m_l}_{L^r}^r+\sum_{l=4}^{15}\norm{m_l}_{L^s}^s\right\}+C_2'',
    \end{align*}
    with constants $C_1'>0$ and $C_2''$. Furthermore, since $L(\bm f,\bm d)$ is a bounded linear functional on $W^{1,p}\times W^{1,q}$ we have
    \[
        \abs{L(\bm f,\bm d)}\leq C_3\left(\norm{\bm f}_{W^{1,p}}+\norm{\bm d}_{W^{1,q}}\right),
    \]
    Since $p,q>1$, the last two inequalities yield
    \begin{equation}
    \label{eqn:coercivity}
        E[\bm f,\bm d]\geq C\left\{\norm{\bm f}_{W^{1,p}}^p+\norm{\bm d}^q_{W^{1,q}}+\sum_{l=1}^{3}\norm{m_l}_{L^r}^r+\sum_{l=4}^{15}\norm{m_l}_{L^s}^s\right\}+D,
    \end{equation}
    where $C>0$ and $D$ are constants.
    \bigskip
    
    Let $\{(\bm f^k,\bm d^k)\}\subset\mathcal A$ be a minimizing sequence for $E[\cdot]$, i.e.,
    \[
        \lim_{k\rightarrow\infty}E[\bm f^k,\bm d^k]=\inf_{(\bm f,\bm d)\in\mathcal{A}}E[\bm f,\bm d].
    \]
    As before, let $m_l^k:=m_l(\nabla \bm f^k,\nabla \bm d^k)$, $l=1,...,15$. By virtue of~\eqref{eqn:coercivity}, we see that the sequences $\{\bm f^k\}$, $\{\bm d^k\}$, $\{m_l^k\}_{l=1}^3$ and $\{m_l^k\}_{l=4}^{15}$ are bounded in $W^{1,p}$, $W^{1,q}$, $L^r$ and $L^s$, respectively, each of which is a  reflexive Banach space. Hence, there exist $\bm f^*\in W^{1,p}$, $\bm d^*\in W^{1,q}$, $\alpha_l\in L^r$ $l=1,2,3$ and $\alpha_l\in L^s$ $l=4,...,15$ and subsequences (not relabeled) converging weakly, i.e.~$\bm f^k\weakarrow\bm f^*$, $\bm d^k\weakarrow\bm d^*$, and $m_l^k\weakarrow\alpha_l$~\cite{brezis2010functional}. For $p=q=2$, the determinants $m^k_l$ $l=1,...,15$ are well-defined $L^1$ functions, while for $4/3<p<2$ and/or $4/3<q<2$, they should be interpreted in the distributional sense, e.g.,
    \begin{align*}
        \int_\Omega \!\!m_4^k\phi\dif x&:=-\frac{1}{2}\!\!\int_\Omega\begin{bmatrix}f^k_{1,1}&-f^k_{1,2}\\-d^k_{1,1}&d^k_{1,2}\end{bmatrix}\begin{bmatrix}f_1^k\\d_1^k\end{bmatrix}\cdot\begin{bmatrix}\phi_{,1}\\\phi_{,2}\end{bmatrix}\dif x\quad\forall\phi\in C^\infty_c(\Omega).
    \end{align*}
    In any case, it is well known that each of these converge as distributions~\cite{dacorogna2007direct}, i.e.,
    \begin{align*}
        \int_\Omega m_l^k\phi\dif x\rightarrow\int_\Omega m_l^*\phi\dif x\quad\forall\phi\in C^\infty_c(\Omega),
    \end{align*}
    where $m_l^*:=m_l(\nabla\bm f^*,\nabla\bm d^*)$, $l=1,...15$. Comparing these to the weak convergence results above, we conclude that
    \begin{align*}
        m_l^k\weakarrow m_l^*\text{ in }L^r,\text{ for }l=1,2,3,\text{ and in }L^s,\text{ for }l=4,...,15. 
    \end{align*}
    
    We now consider the convergence of $J^k:= J(\bm d^k,\nabla\bm f^k)$. We first observe that
    \begin{align}\label{eqn:f_cross-product}
        \bm f_{,1}^k\times\bm f_{,2}^k = m_1^k\bm e_1 + m_2^k\bm e_2 +m_3^k\bm e_3.
    \end{align}
    Thus $\bm f_{,1}^k\times\bm f_{,2}^k\weakarrow \bm f_{,1}^*\times\bm f_{,2}^*$ weakly in $L^r$ for $r>4/3$. In addition, $\bm d^k\weakarrow\bm d^*$ weakly in $W^{1,q}$ implies strong convergence in $L^{q'}$ for $1\leq q'<q^*$ where
    \begin{align*}
        q^*:=
        \begin{cases}
            \frac{2q}{2-q} &1\leq q<2,\\
            \infty &q=2.
        \end{cases}
    \end{align*} 
    Keeping in mind that $q,r>4/3$, if $q\geq r$, we choose $q'=r/(r-1)$ and if $r>q$, we choose $q'=q/(q-1)$. In either case, $\int_\Omega\bm d^k\cdot(\bm f^k_{,1}\times\bm f^k_{,2})\phi\dif x\rightarrow\int_\Omega\bm d^*\cdot(\bm f^*_{,1}\times\bm f^*_{,2})\phi\dif x$ for all $\phi\in L^\infty$, i.e., $J^k\weakarrow J^*$ weakly in $L^1$.
    \smallskip
    
    Next, we show that $(\bm f^*,\bm d^*)\in\mathcal A$. First, we claim that $J^*>0$ a.e. in $\Omega$. By virtue of Mazur's theorem, we can construct a sequence of convex combinations of the sequence $\{J^k\}$ that converges strongly in $L^1$ to $J^*$. Thus, there is a subsequence converging to $J^*$ a.e.~in $\Omega$. Since each $J^k>0$ a.e., we deduce that $J^*\geq0$ a.e. Now suppose that that $J^*=0$ a.e.~in $\mathcal{U}\subset\Omega$, where $\abs{\mathcal{U}}>0$. Employing $\chi_{\mathcal{U}}$ as a test function, the weak convergence of $J^k$ implies $J^k\rightarrow 0$ strongly in $L^1(\mathcal{U})$. Thus, for a subsequence (not relabeled), $J^k\rightarrow 0$ a.e.~in $\mathcal{U}$. But then~\ref{itm:growth2} and Fatou's lemma imply
    \begin{align*}
        \liminf_{j\rightarrow\infty}E[\bm f^j,\bm d^j]\geq\int_{\mathcal{U}}\lim_{j\rightarrow\infty}W(x,\bm d^j(x),\nabla\bm f^j(x),\nabla\bm d^j(x))\dif x +C=\infty,
    \end{align*}
    which contradicts our hypothesis that $\inf_{\mathcal{A}}E[\bm f,\bm d]<\infty$. Hence, $J^*>0$ a.e.~in $\Omega$. In addition, $\bm f^k-\bm f_o\in W^{1,p}_\Gamma(\Omega,\RR^3)$, which is a closed linear subspace of $W^{1,p}$. Thus, $W^{1,p}_\Gamma$ is weakly closed~\cite{brezis2010functional}, and $\bm f^k-\bm f_o\weakarrow\bm f^*-\bm f_o\in W^{1,p}_\Gamma$. Similarly, $\bm d^*-\bm d_o\in W^{1,q}_\Gamma$. We conclude that $(\bm f^*,\bm d^*)\in\mathcal A$.
    
    To complete the proof, we combine the results above with Proposition~\ref{prop:wlsc} to conclude $E[\bm f^*,\bm d^*]\leq\liminf_{k\rightarrow\infty}E[\bm f^k,\bm d^k]$ with $(\bm f^*,\bm d^*)\in\mathcal{A}$, i.e., $E$ attains its infimum on $\mathcal{A}$.
\end{proof}

\section{Constrained Minimizers}\label{sec:constrained-minimizers}
We now explore three different constrained versions of the theory that are common in the study of nonlinearly elastic shells.
\vspace{-0.3cm}
\subsection{Special Theory}\label{sec:special_theory}
Here the director field is constrained to have unit length, i.e.,
\begin{equation}\label{eqn:special-theory}
    \abs{\bm d}=1\text{ a.e.~in }\Omega.
\end{equation}
Again, we assume~\ref{itm:growth}-\ref{itm:growth2} and incorporate~\eqref{eqn:special-theory} into the admissible set:
\begin{align*}
    \mathcal{A}:=\{(\bm f,\bm d)\in W^{1,p}(\Omega,\RR^3)\times W^{1,q}(\Omega,\RR^3):m_l\in L^r(\Omega),\,l=1,2,3;&\\
    m_l\in L^s(\Omega),\,l=4,...,15;\,
    J\in L^1(\Omega);\\
    J>0 \text{ a.e.~in }\Omega;\,\abs{\bm d}=1\text{ a.e.~in }\Omega;\\
    \bm f -\bm f_o\in W^{1,p}_\Gamma(\Omega,\RR^3);\,
    \bm d- \bm d_o\in W^{1,q}_\Gamma(\Omega,\RR^3)&\},
\end{align*}
where $(\bm f_o,\bm d_o)\in W^{1,p}(\Omega,\RR^3)\times W^{1,q}(\Omega,\RR^3)$ are prescribed and $\abs{\bm d_o}=1$ and $\bm d_o\cdot(\bm f_{o,1}\times\bm f_{o,2})>0$ a.e.

The existence of a minimizer follows precisely as before in Theorem~\ref{thm:existence1}. We only need to show that~\eqref{eqn:special-theory} is satisfied. This follows from compact embedding: For a minimizing sequence, we have $\bm d^k\weakarrow\bm d^*$ in $W^{1,r}\implies\bm d^k\rightarrow\bm d^*$ in $L^q$. Thus there is a convergent subsequence $\bm d^{k_n}\rightarrow\bm d^*$ a.e. Since $\abs{\bm d^{k_n}}=1$ a.e., we have $\abs{\bm d^*}=1$ a.e.

\subsection{Normal Director Field}\label{sec:normal_director_field}
We now constrain the director field to be normal to the surface (allowing its length to be variable), viz.,
\begin{equation}\label{eqn:normal-director}
    \bm d\cdot\bm f_{,\alpha}=0\text{ a.e.~in }\Omega,\quad\alpha=1,2.
\end{equation}
Again, we assume~\ref{itm:growth}-\ref{itm:growth2} and incorporate~\eqref{eqn:normal-director} into the admissible set:
\begin{align*}
    \mathcal{A}:=\{(\bm f,\bm d)\in W^{1,p}(\Omega,\RR^3)\times W^{1,q}(\Omega,\RR^3):m_l\in L^r,\,l=1,2,3;&\\
    m_l\in L^s(\Omega),\,l=4,...,15;\,
    J\in L^1(\Omega);\,J>0 \text{ a.e.~in }\Omega&;\\
    \bm d\cdot\bm f_{,\alpha}=0\text{ a.e.~in }\Omega,\,\alpha=1,2&;\\
    \bm f -\bm f_o\in W^{1,p}_\Gamma(\Omega,\RR^3);\,\bm d-\bm d_o\in W^{1,q}_\Gamma(\Omega,\RR^3)&\},
\end{align*}
where $(\bm f_o,\bm d_o)\in W^{1,p}(\Omega,\RR^3)\times W^{1,q}(\Omega,\RR^3)$ are prescribed with the property that $\bm d_o\cdot\bm f_{o,\alpha}=0$ for $\alpha=1,2$ and $\bm d_o\cdot(\bm f_{o,1}\times\bm f_{o,2})>0$ a.e.

Existence of a minimizer again follows as before. In this case, we only need to show that $(\bm f^*,\bm d^*)$ satisfies~\eqref{eqn:normal-director}. Without loss of generality, suppose $p\leq q$, then we have $\bm f^k_{,\alpha}\weakarrow\bm f^*_{,\alpha}$ weakly in $L^p$ and by compact embedding $\bm d^k\rightarrow\bm d^*$ in $L^{\frac{p}{p-1}}$. The case of $p>q$ works similarly.
By admissibility, $0=\int_\Omega\bm d^k\cdot\bm f^k_{,\alpha}\phi\dif x\rightarrow\int_\Omega\bm d^*\cdot\bm f_{,\alpha}^*\phi\dif x$ for all $\phi\in C^\infty_c(\Omega)$, $\alpha=1,2$. By the theorem of DuBois-Reymond~\cite{morrey2009multiple}, we conclude that $(\bm f^*,\bm d^*)$ satisfies~\eqref{eqn:normal-director}.

\subsection{Kirchhoff-Love Theory}\label{sec:Kirchhoff-Love}
In this classical case, the director field is required to coincide with the unit normal field to the deformed surface. This is usually referred to as the Kirchhoff-Love theory.

The constraints are now
\[
    \bm d\cdot \bm f_{,\alpha}=0\quad\text{a.e.~in }\Omega\quad \alpha=1,2;
\]
\[
    \abs{\bm d}=1\quad\text{a.e.~in }\Omega.
\]
We modify the admissible set to accommodate these:
\begin{align*}
    \mathcal{A}:=\{(\bm f,\bm d)\in W^{1,p}(\Omega,\RR^3)\times W^{1,q}(\Omega,\RR^3):m_l\in L^r(\Omega),\,l=1,2,3&;\\
    m_l\in L^s(\Omega),\,l=4,...,15;\,
    J\in L^1(\Omega);\,J>0 \text{ a.e.~in }\Omega&;\\
    \bm d\cdot\bm f_{,\alpha}=0 \text{ a.e.~in }\Omega,\, \alpha=1,2;\,
    \abs{\bm d}=1\text{ a.e.~in }\Omega&;\\
    \bm f-\bm f_o\in W^{1,p}_\Gamma(\Omega,\RR^3);\,
    \bm d-\bm d_o\in W^{1,q}_\Gamma(\Omega,\RR^3)&\},
\end{align*}
where $(\bm f_o,\bm d_o)\in W^{1,p}\times W^{1,q}$ are prescribed and satisfy $\bm d_o\cdot\bm f_{o,\alpha}=0$ for $\alpha=1,2$, $\abs{\bm d_{o}}=1$ and $\bm d_{o}\cdot(\bm f_{o,1}\times\bm f_{o,2})>0$ a.e.

The existence of a minimizer follows as before, and the arguments used in Section~\ref{sec:special_theory} and~\ref{sec:normal_director_field} imply $\abs{\bm d^*}=1$ a.e. and $\bm d^*\cdot\bm f^*_{,\alpha}=0$ a.e., $\alpha=1,2$, respectively.

\section{Concluding Remarks}\label{sec:concluding-remarks}
In our notation, the stored-energy function employed in~\cite{ciarlet2013orientation} takes the form $W(x,\bm d,\bm F,\bm G)=\Psi(x,\bm d,\bm F,\bm G,m_1,...,m_{15})$, where $(\bm F,\bm G,m_1,...,m_{15})\mapsto\Psi$ is convex. In the absence of bending energy, the membrane energy depends only on the deformation gradient, i.e., $\tilde W(x,\bm F)=\tilde\Psi(x,\bm F,m_1,m_2,m_3)$ with $(\bm F,m_1,m_2,m_3)\mapsto\tilde\Psi$ convex. That is, $\tilde W$ is polyconvex as in~\cite{ball1981null}. As noted in Section~\ref{sec:introduction}, this is apparently equivalent to some kind of tension-field theory, i.e., compression is replaced by zero stress. In contrast, the membrane version of~\eqref{eqn:polyconvex} reduces to $\hat{W}(x,\bm F)=\hat{\Phi}(x,\bm F,J)$, with $(\bm F,J)\mapsto\hat{\Phi}$ convex. Here, $J$ denotes the local area ratio of the surface, i.e.,
\begin{align*}
  J:=\bm n\cdot(\bm f_{,1}\times\bm f_{,2})=\abs{\bm f_{,1}\times\bm f_{,2}}=[\det(\bm F^T\bm F)]^{1/2},
\end{align*}
where $\bm n$ denotes the unit normal field in the same direction as $\bm f_{,1}\times\bm f_{,2}$. Observe that $\hat{W}$ is polyconvex for planar deformations only, viz., $\bm F\in\RR^{2\times 2}$. Indeed, it can be shown that $\hat W$ is not even rank-one convex for $\bm F\in\RR^{3\times 2}$. Among other things, this is the correct model for predicting wrinkling: In the absence of bending energy, arbitrarily finer and finer spatial oscillations (wrinkles) are allowed to develop in lieu of sustained compression. Such behavior, in turn, is penalized by bending energy, enabling the resolution of wrinkling amplitudes and wavelengths. Also, compressive stresses are not generally zero.
\smallskip

A rigorous existence theorem for the Kirchhoff-Love model, based on energy minimization, is presented in~\cite{anicic2018polyconvexity}. Local orientation preservation is maintained via a blow-up argument similar to that employed here and in~\cite{ciarlet2013orientation}, except that the volume ratio employed in~\cite{anicic2018polyconvexity} takes into account the surface thickness via the Cosserat ansatz, cf.~\cite{antman2005problems}. The definition of polyconvexity used in ~\cite{anicic2018polyconvexity}, involving only $\bm G,\bm F$ and the volume ratio just described, is much more restrictive than that employed in this work. However, when reduced to its membrane part (ignoring thickness) it agrees with ours. Also, in contrast to our approach based on constraints, the unit normal field is directly parametrized by the surface deformation in~\cite{anicic2018polyconvexity}. A distinction between those results and those of Section~\ref{sec:Kirchhoff-Love} becomes apparent upon taking a formal first variation (not rigorous): Our Euler-Lagrange equations at a minimizer would involve Lagrange multiplier fields enforcing the constraints (representing transverse shears and through-thickness resultants), whereas the latter would be effectively eliminated from the Euler-Lagrange equations associated with~\cite{anicic2018polyconvexity}.
\smallskip

We also mention that~\cite{healey2022existence} is comparable to our results from Section~\ref{sec:normal_director_field}. In the former, the normal director field (not necessarily unit) is directly parametrized by the surface deformation. This entails a full second-gradient surface theory, while local orientation is preserved in a manner similar to that presented here. For growth conditions on the second gradient with $p>2$, the first variation can be taken rigorously at a minimizer, leading to the weak form of the Euler-Lagrange equations. This follows from the same construction used in~\cite{healey2009injective}. Interestingly, it is not at all clear how to do so based on the results of Section~\ref{sec:normal_director_field}.

\section*{Acknowledgements}
This work was supported in part by the National Science Foundation through grant DMS-2006586, which is gratefully acknowledged.

\bibliographystyle{amsalpha}
\bibliography{master}
\end{document}